\documentclass[12pt]{article}

\usepackage[english]{babel}

\usepackage{amsfonts, amssymb, amsmath, amsthm, epsf}

\usepackage{graphicx}

\usepackage{cite}

\usepackage{dsfont}

\DeclareMathOperator{\Diff}{Diff}

\DeclareMathOperator{\FinHolSh}{FinHolSh}

\newcommand{\Z}{\mathds{Z}}
\newcommand{\R}{\mathds{R}}
\newcommand{\N}{\mathds{N}}
\newcommand{\Leb}{\mbox{Leb}}
\newcommand{\dist}{\mbox{dist}}
\newcommand{\lam}{\lambda}
\newcommand{\ep}{\varepsilon}

\newcommand{\Holder}{\mbox{H\"older }}

\newtheorem{theorem}{Theorem}
\newtheorem{lemma}{Lemma}
\newtheorem{conjecture}{Conjecture}

\theoremstyle{definition}
\newtheorem{remark}{Remark}
\newtheorem{defin}{Definition}

\newcommand{\lr}[1]{\left(#1\right)}

\begin{document}

\title{Shadowing in linear skew products}

\author{Sergey Tikhomirov
\footnote{Max Plank Institute for Mathematics in the Sciences, Inselstrasse 22, Leipzig, 04103, Germany; Chebyshev Laboratory, Saint-Petersburg State Univeristy, 14th line of Vasilievsky Island, 29B, Saint-Petersburg, 199178, Russia; sergey.tikhomirov@gmail.com}
}

\date{}

\maketitle

\begin{abstract}
We consider a linear skew product with the full shift in the base and nonzero Lyapunov exponent in the fiber. We provide a sharp estimate for the precision of shadowing for a typical pseudotrajectory of finite length. This result indicates that the high-dimensional analog of Hammel-Yorke-Grebogi's conjecture \cite{Yorke1, Yorke2} concerning the interval of shadowability for a typical pseudotrajectory is not correct. The main technique is reduction of the shadowing problem to the ruin problem for a simple random walk. Bibliography 22 titles.
\end{abstract}

\section{Introduction}

The theory of shadowing of approximate trajectories
(pseudotrajectories) of dynamical systems is now a well-developed
part of the global theory of dynamical systems (see
the monographs \cite{PilBook, PalmBook} and \cite{PilRev} for a survey of modern results). The shadowing problem is related to the following question: under which conditions, for any pseudotrajectory of $f$ does there exist a close trajectory?

Let us consider a metric space $(G, \dist)$ and  a continuous map $f : G \to G$, $d>0$. For an interval $I = (a, b)$, where $a \in \Z \cup \{-\infty\}$, $b \in \Z \cup \{+\infty\}$, a sequence of points $\{y_k\}_{k \in I}$ is called a \textit{$d$-pseudotrajectory} if the following inequalities hold:
$$
\dist(y_{k+1}, f(y_k)) < d, \quad k \in \Z, \quad k, k+1 \in I.
$$

\begin{defin}\label{defStSh}
We say that $f$ has the \textit{shadowing property}
if for any $\ep > 0$ there exists $d>0$ such that for any
$d$-pseudotrajectory $\{y_k\}_{k\in \Z}$ there exists a trajectory
$\{x_k\}_{k\in \Z}$ such that
\begin{equation}\label{sh}
\dist(x_k, y_k) < \ep, \quad k \in \Z.
\end{equation}
In this case, we say that the pseudotrajectory
$\{y_k\}$ is \textit{$\ep$-shadowed} by~$\{x_k\}$.
\end{defin}

The study of this problem was originated by Anosov \cite{Ano} and Bowen \cite{Bow}. This theory is closely related to the classical theory of structural stability.

Let $G$ be a smooth compact Riemannian manifold of class $C^{\infty}$ without
boundary with  metric $\dist$ and let $f \in \Diff^1(G)$.
It is well known that a diffeomorphism has the shadowing
property in a neighborhood of a hyperbolic set \cite{Ano, Bow} and a
structurally stable diffeomorphism has the shadowing property on the
whole manifold \cite{Rob, Saw}.
At the same time, it is easy to give an example of a diffeomorphism
that is not structurally stable but has shadowing property (see \cite{PilVar}, for instance). Thus, structural stability is not equivalent to shadowing.

Relation between shadowing and structural stability was studied in several contexts. It is known that the $C^1$-interior of the set of diffeomorphisms having the shadowing property coincides with the set of structurally stable diffeomorphisms \cite{Sak} (see \cite{PilRodSak} for a similar result for the orbital shadowing property). Abdenur and Diaz conjectured that a $C^1$-generic
diffeomorphism with the shadowing property is structurally stable;
they have proved this conjecture for the so-called tame diffeomorphisms~\cite{AbdDiaz}.

Analyzing the proofs of the first shadowing results by Anosov~\cite{Ano} and Bowen~\cite{Bow}, it is easy to see that, in a neighborhood of a hyperbolic set, the shadowing property is Lipschitz (and the same holds in the case of a structurally stable diffeomorphism~\cite{PilBook}).
\begin{defin}\label{defLipSh}
We say that $f$ has the \textit{Lipschitz shadowing property} if there exist $\ep_0, L_0 > 0$ such that for any $\ep < \ep_0$ and $d$-pseudotrajectory $\{y_k\}_{k\in \Z}$ with $d = \ep/L_0$ there exists a trajectory $\{x_k\}_{k\in \Z}$ such that inequalities~\eqref{sh} hold.
\end{defin}
Recently \cite{PilTikhLipSh} it was proved that
a diffeomorphism $f \in C^1$ has Lipschitz shadowing property if and only if it is structurally stable
(see~\cite{PilVar, LipPerSh} for a similar results for  periodic and variational shadowing properties).

In the present paper, we are interested which type of shadowing is possible for non-hyperbolic diffeomorphisms.
The following notion will be important for us \cite{TikhHol}:
\begin{defin}\label{defFinHolSh}
We say that $f$ has the \textit{finite \Holder shadowing property} with
exponents \hbox{$\theta \in (0, 1)$}, $\omega \geq 0$ ($\FinHolSh(\theta, \omega)$) if there exist $d_0, L, C > 0$ such that for any $d < d_0$ and
$d$-pseudotrajectory $\{y_k\}_{k \in [0, Cd^{-\omega}]}$ there exists a trajectory
$\{x_k\}_{k \in [0, Cd^{-\omega}]}$ such that
\begin{equation}\label{shHol}\notag
\dist(x_k, y_k) < L d^{\theta}, \quad k \in [0, Cd^{-\omega}].
\end{equation}
\end{defin}

S. Hammel, J. Yorke, and C. Grebogi made the following conjecture  based on results of numerical experiments \cite{Yorke1, Yorke2}:
\begin{conjecture}\label{ConjHGY}
A typical dissipative map $f: \R^2  \to \R^2$ with positive Lyapunov exponent satisfies $\FinHolSh(1/2, 1/2)$.
\end{conjecture}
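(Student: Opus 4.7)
The plan is to reduce shadowing near a $\mu$-typical orbit to estimates on partial sums of the unstable cocycle together with control on the orbit's approach to tangencies, and to close the argument via ruin-type random walk bounds in the spirit of this paper.

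Fix an SRB (physical) measure $\mu$ for $f$ with one positive Lyapunov exponent $\lam$; dissipativity forces the second exponent $\lam_2 < 0$. For $\mu$-almost every base point $p$, Pesin theory supplies a measurable invariant splitting $T_{f^n p}\R^2 = E^u_n \oplus E^s_n$ along the forward orbit. Restrict $p$ to a Pesin block on which the splitting, Pesin metric and nonlinearity estimates have uniform constants.

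For a pseudotrajectory $\{y_k\}_{k=0}^{N}$ with $N = Cd^{-1/2}$ close to the orbit of $p$, apply the Lyapunov--Perron method: look for the true trajectory as $x_k = y_k + u^u_k + u^s_k$ with $u^s$ obeying a forward recursion and $u^u$ a backward recursion starting at time $N$. Up to quadratic terms, which are controlled afterwards by a Newton iteration, the linear problem has source of size $d$ and operator norm bounded by
\[
\mathcal{Q} = \sup_{n \leq N} \Bigl( \sum_{m=0}^{N-n} \|(Df^m|_{E^u_n})^{-1}\| + \sum_{m=0}^{n} \|Df^m|_{E^s_{n-m}}\| \Bigr) \cdot \sup_{n} \sin^{-1}\angle(E^u_n, E^s_n),
\]
so the shadowing error is at most $d \cdot \mathcal{Q}$.

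The crucial input is a probabilistic estimate in the spirit of the ruin problem. Writing $\log\|Df^m|_{E^u_n}\| = m\lam + W_m$ with $W_m$ approximately Gaussian of variance $m\sigma^2$ by the CLT for a sufficiently mixing $f$, a one-sided barrier bound gives $\Pr(\inf_m \log\|Df^m|_{E^u_n}\| \geq -T) \geq 1 - e^{-cT}$ uniformly in $n$, so a constant $T$ bounds the cocycle sums. For the angle factor, Borel--Cantelli on visits to $r$-neighborhoods of the critical/tangency set along an orbit of length $N$ shows the closest approach is of order $N^{-1}$, giving $\angle(E^u_n, E^s_n)$ at least of order $N^{-1} \sim d^{1/2}$ and hence $\sin^{-1}\angle = O(d^{-1/2})$. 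Combining, $\mathcal{Q} = O(d^{-1/2})$ with probability $1-o(1)$ in $p$, so the shadowing error is $O(d \cdot d^{-1/2}) = O(d^{1/2})$ on an interval of length $Cd^{-1/2}$, which is exactly the $\FinHolSh(1/2,1/2)$ bound.

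The main obstacle is turning \emph{typical} into a precise hypothesis. Rigorous CLT, barrier and recurrence estimates are available for H\'enon-like maps in the Benedicks--Carleson regime (positive SRB exponent, Young tower with exponential tails), but not in any known $C^r$-generic sense among dissipative surface maps; carrying out the scheme in full generality would require settling deep open mixing and recurrence problems. Controlling the quadratic nonlinearity on a Pesin block with varying constants and handling the drift of the pseudotrajectory across charts are standard additional steps once the probabilistic estimates are in place.
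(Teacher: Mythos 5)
The statement you set out to prove is Conjecture \ref{ConjHGY}, which the paper does not prove and does not claim to prove: it is the Hammel--Grebogi--Yorke conjecture, recorded without even a precise meaning of ``typical,'' and the entire point of the paper is to analyze a model (the linear skew product) in which the natural analog of this statement can \emph{fail}. Theorem \ref{thmMain} together with Remarks \ref{remY} and \ref{remex} shows that for $\lam_0=1/2$, $\lam_1=3$ the shadowing exponent is governed by $c_0=1/b>1$, so a typical pseudotrajectory of length $N\sim d^{-1/2}$ is \emph{not} $O(d^{1/2})$-shadowable there. No blind proof of the conjecture as stated should be expected to succeed, and yours does not.

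Beyond that mismatch, your sketch breaks down at exactly the point the paper isolates. You bound the linearized shadowing operator via a barrier estimate $\Pr(\inf_m \log\|Df^m|_{E^u_n}\|\ge -T)\ge 1-e^{-cT}$ and then assert that a constant $T$ controls the cocycle sums ``uniformly in $n$.'' That estimate is for a single starting time $n$; to control $\mathcal{Q}$ you need it simultaneously for all $n\le N$, and the union over $N$ starting times forces $T\sim(1/b)\ln N$, where $b$ is the positive root of $\tfrac12\lr{e^{-ba_0}+e^{-ba_1}}=1$ as in Lemma \ref{lemC}. This yields $\mathcal{Q}\gtrsim N^{1/b}$ rather than $O(1)$ times the angle factor, and the exponent $1/b$ depends on the full distribution of the fibre expansion rates, not merely on the positivity of the mean Lyapunov exponent; the paper's lower bound (S1) shows this $N^{1/b}$ growth genuinely occurs with probability tending to one, it is not an artifact of the union bound. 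Whether $\FinHolSh(1/2,1/2)$ can hold then hinges on whether $1/b\le 1$, which fails for admissible parameters. Your closing paragraph already concedes that the CLT, barrier and recurrence inputs are unavailable in any $C^r$-generic sense, so even the salvageable parts of the argument are conditional. In short: this is an open conjecture, the paper supplies a mechanism by which its analog fails rather than a proof, and the uniform-in-$n$ cocycle bound is the precise step that cannot be repaired.
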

In \cite{Yorke1, Yorke2}, the precise mathematical meaning of word ``typical'' was not provided.

There are plenty of not structurally stable examples satisfying $\FinHolSh(1/2, 1/2)$, for instance \cite[Example 1]{TikhHol} and the identity map.

In the present paper, we study this conjecture for a model example: a linear skew product (see the definition in Section \ref{secMain}). We give lower and upper bounds for the precision of shadowing of finite length pseudotrajectories.
These bounds show that, depending on parameters of the skew product diffeomorphism, it might satisfy and not satisfy analog of Conjecture \ref{ConjHGY}.

We expect that similarly to works \cite{Gor1, Gor2}, such a skew product can be embedded into a diffeomorphism of a manifold of dimension  4. This would allow us to construct an open set of diffeomorphisms violating a high-dimensional analog of Conjecture \ref{ConjHGY}. Similarly, we can construct an open set of diffeomorphisms satisfying this conjecture. However, we did not implement such a construction  and leave it out of the scope of the present paper.

Note that in \cite{TikhHol} it was shown that Conjecture \ref{ConjHGY} cannot be improved (see also \cite{PujKor} for the discussion on \Holder shadowing for 1-dimensional maps):
\begin{theorem}\label{thmFinHolSh}
If a diffeomorphism $f \in C^2$ satisfies $\FinHolSh(\theta, \omega)$ with
\begin{equation}\notag
\theta > 1/2, \quad \theta + \omega > 1,
\end{equation}
then $f$ is structurally stable.
\end{theorem}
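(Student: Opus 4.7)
Since the theorem is cited from \cite{TikhHol}, no proof is provided here; I sketch the strategy I would follow. The plan is to argue by contrapositive, invoking the main result of \cite{PilTikhLipSh}: every $C^1$ diffeomorphism with the Lipschitz shadowing property is structurally stable. Thus, assuming $f\in C^2$ is not structurally stable, it suffices to show that $\FinHolSh(\theta,\omega)$ with $\theta>1/2$ and $\theta+\omega>1$ cannot hold, by using the Hölder shadowing to manufacture an object whose existence is incompatible with the failure of hyperbolicity.

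If $f$ is not structurally stable, a Ma\~n\'e--Pliss-type argument produces an orbit $\{x_k\}_{k\in\Z}$ of $f$ along which the linear cocycle $Df$ admits no exponential dichotomy. Concretely, there is a bounded forcing $\{\omega_k\}$, $\|\omega_k\|\le 1$, such that the linearized inhomogeneous equation
\[
v_{k+1}-Df(x_k)v_k=\omega_k
\]
has no bounded solution on $\Z$, and the minimal-norm solutions $\{v_k\}_{k\in[0,N]}$ on finite windows have $M_N:=\sup_k\|v_k\|\to\infty$ as $N\to\infty$. Using such a finite-horizon $\{v_k\}$, I build the candidate pseudotrajectory $y_k:=\exp_{x_k}(\delta v_k)$ in exponential coordinates around the orbit, with scaling parameter $\delta>0$. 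The $C^2$-Taylor bound gives
\[
\dist(y_{k+1},f(y_k))\le \delta+K\delta^2 M_N^2,
\]
so $\{y_k\}$ is a $d$-pseudotrajectory for $d=\delta+K\delta^2 M_N^2$. Matching $N=Cd^{-\omega}$ as required by $\FinHolSh(\theta,\omega)$ then couples $\delta$, $d$, and $M_N$ in a definite way.

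Applying $\FinHolSh(\theta,\omega)$ yields a true trajectory $\{z_k\}_{k\in[0,N]}$ with $\dist(z_k,y_k)\le Ld^{\theta}$. Setting $w_k:=z_k-x_k$ in local coordinates, the $C^2$-Taylor expansion gives $w_{k+1}=Df(x_k)w_k+O(\|w_k\|^2)$, while the shadowing yields $\|w_k-\delta v_k\|\le Ld^{\theta}$. The rescaled sequence $u_k:=w_k/\delta$ is thus an approximate \emph{homogeneous} linear cocycle solution; the two exponent conditions $\theta>1/2$ and $\theta+\omega>1$ arise as exactly the compatibility constraints ensuring that both the rescaled shadowing error $Ld^{\theta}/\delta$ and the rescaled Taylor residue $\|w_k\|^2/\delta$ become negligible against the intrinsic amplitude $M_N$. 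Normalizing $u_k$ by $M_N^{-1}$ and extracting a diagonal/Arzel\`a--Ascoli subsequential limit as $\delta\to 0$, $N\to\infty$, one obtains a nontrivial uniformly bounded solution of the homogeneous linearized cocycle along a limit orbit in the shift closure of $\{x_k\}$, which contradicts the failure of exponential dichotomy.

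The hard part is the delicate three-way balance between $\delta$, the window length $N=Cd^{-\omega}$, and the growth scale $M_N$: $\theta>1/2$ is what keeps the $C^2$-Taylor residue dominated by the linear pseudotrajectory error in the critical regime, while $\theta+\omega>1$ is what forces the shadowing precision $Ld^{\theta}$, after rescaling by $\delta$, to decay faster than $M_N\asymp N\asymp d^{-\omega}$. A secondary obstacle is supplying a quantitative Pliss-type lower bound on $M_N$ (for instance $M_N\gtrsim N$ in the simplest non-hyperbolic case); without such quantitative control the entire rescaling scheme degenerates, so the technical heart of the argument is the passage from qualitative non-hyperbolicity to the quantitative growth estimate used in the pseudotrajectory construction.
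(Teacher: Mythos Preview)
You correctly observe that the paper does not prove this theorem: it is quoted from \cite{TikhHol} as background, and no argument is supplied here. So there is nothing in the present paper to compare against, and your decision to give only a sketch is appropriate.

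Your outline is broadly in the spirit of the actual proof in \cite{TikhHol}: one links $\FinHolSh(\theta,\omega)$ to quantitative solvability of the inhomogeneous linearized equation along orbits (a Perron/Ma\~n\'e-type condition), and then invokes the equivalence of that condition with structural stability. The construction $y_k=\exp_{x_k}(\delta v_k)$ and the use of the $C^2$ Taylor estimate to control $d$ are exactly the right ingredients, and you have identified the correct roles of the two inequalities: $\theta>1/2$ tames the quadratic remainder, $\theta+\omega>1$ makes the rescaled shadowing error beat the window length.

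There is, however, a genuine slip in your final contradiction. You write that the limit procedure produces ``a nontrivial uniformly bounded solution of the \emph{homogeneous} linearized cocycle \ldots\ which contradicts the failure of exponential dichotomy.'' This is backwards: a bounded nontrivial homogeneous solution is perfectly compatible with (indeed symptomatic of) the absence of dichotomy. What the argument must produce is a \emph{bounded solution of the inhomogeneous equation} with the chosen forcing $\{\omega_k\}$, contradicting the standing hypothesis that none exists. Concretely, with your notation $u_k=w_k/\delta$ one has $u_{k+1}\approx Df(x_k)u_k$ while $v_{k+1}=Df(x_k)v_k+\omega_k$, so it is the difference $v_k-u_k$ that is an approximate solution of the inhomogeneous equation with forcing $\omega_k$, and the shadowing bound $|u_k-v_k|\le L d^{\theta}/\delta$ is what, after the parameter balance, yields the sought bounded inhomogeneous solution in the limit. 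Rewriting the endgame this way also clarifies why one needs a quantitative relation between $M_N$ and $N$ (your ``secondary obstacle''): it is exactly what forces $L d^{\theta}/\delta$ to stay bounded while $N\to\infty$.
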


The paper is organized as follows. In Section \ref{secMain}, we formulate exact statements of the results. In Section \ref{secRW}, we  formulate a particular problem for random walks and prove its equivalence to the shadowing property. In Section \ref{secProof}, we give a proof of the main result.

\section{Main Result}\label{secMain}
Let $\Sigma = \{0, 1\}^{\Z}$. Endow $\Sigma$ with the standard probability measure $\nu$ and the following  metric:
$$
\dist(\{\omega^i\}, \{\tilde{\omega}^i\}) = 1/2^k, \quad \mbox{where $k = \min\{|i|: \omega^i \ne \tilde{\omega}^i\}$}.
$$
For a sequence $\omega = \{ \omega^i \} \in \Sigma$ denote by $t(\omega)$ the 0th element of the sequence: $t(\omega) = \omega^0$. Define the ``shift map'' $\sigma: \Sigma \to \Sigma$ as follows:
$$
\lr{\sigma(\omega)}^i = \omega^{i+1}.
$$

Consider the space $Q = \Sigma \times \R$. Endow $Q$ with the product measure $\mu = \nu \times \Leb$ and the maximum metric:
$$
\dist((\omega, x), (\tilde{\omega}, \tilde{x})) = \max(\dist(\omega,\tilde{\omega}), \dist(x, \tilde{x})).
$$
For $q \in Q$ and $a>0$ denote by $B(a, q)$ the open ball of radius $a$ centered at~$q$.

Fix $\lam_0, \lam_1 \in \R$ satisfying the following conditions
\begin{equation}\label{eqlam}
0 < \lam_0 < 1 < \lam_1, \quad \lam_0 \lam_1 \ne 1.
\end{equation}
Consider the map $f: Q \to Q$ defined as follows:
$$
f(\omega, x) = (\sigma(\omega), \lam_{t(\omega)}x).
$$

For $q \in Q$, $d > 0$, $N \in \N$ let $\Omega_{q, d, N}$ be the set of $d$-pseudotrajectories of length $N$ starting at $q_0 = q$. If we consider $q_{k+1}$ being chosen at random in $B(d, f(q_k))$ uniformly with respect to the measure $\mu$, then $\Omega_{q, d, N}$ forms a finite time Markov chain. This naturally endows $\Omega_{q, d, N}$ with a probability measure $P$. See also \cite{YorkeAbsolute} for a similar concept for infinite pseudotrajectories.

For $\ep > 0$ let $p(q, d, N, \ep)$ be the probability of pseudotrajectory in $\Omega_{q, d, N}$ to be $\ep$-shadowable.
Note that this event is measurable since it forms an open subset of $\Omega_{q, d, N}$.

\begin{lemma}\label{lemq}
Let $q = (\omega, x)$, $\tilde{q} = (\omega, 0)$. For any $d, \ep > 0$, $N \in \N$,  the following equality holds:
$$
p(q, d, N, \ep) = p(\tilde{q}, d, N, \ep).
$$
\end{lemma}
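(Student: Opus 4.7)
The key observation is that the fiber map $x\mapsto \lam_{t(\omega)} x$ is linear, so the fiber ``noise'' along a pseudotrajectory can be transported between different starting points. My plan is to construct a measure-preserving bijection $\Phi\colon \Omega_{q,d,N}\to\Omega_{\tilde q,d,N}$ that also preserves $\ep$-shadowability; this immediately yields equality of the two probabilities.

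Given $\{q_k=(\omega_k,x_k)\}\in\Omega_{q,d,N}$ with $q_0=(\omega,x)$, I would define $\Phi$ so as to preserve the symbolic component and the fiber increments: set $\tilde x_0=0$ and inductively $\tilde x_{k+1}:=\lam_{t(\omega_k)}\tilde x_k+\eta_k$ where $\eta_k:=x_{k+1}-\lam_{t(\omega_k)} x_k\in(-d,d)$, then put $\tilde q_k:=(\omega_k,\tilde x_k)$. Since the symbolic noise $\omega_{k+1}\in B(d,\sigma(\omega_k))$ and fiber noise $\eta_k$ are shared by the two sequences, $\{\tilde q_k\}\in\Omega_{\tilde q,d,N}$, and swapping the roles of $q$ and $\tilde q$ gives the inverse. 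For measure-preservation, the Markov transition kernel at step $k$ has the product form $\nu|_{B(d,\sigma(\omega_k))}\times\Leb|_{(\lam_{t(\omega_k)}x_k-d,\,\lam_{t(\omega_k)}x_k+d)}$; since $\Phi$ fixes both the symbolic noise and $\eta_k$, and since the fiber ball for $\tilde q_k$ is merely a translate of that for $q_k$ on which Lebesgue is translation-invariant, $\Phi$ pushes $P$ forward to $P$.

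It remains to show that shadowability is preserved. I would assume $\ep<1$ (larger $\ep$ being either harmless or reducible by a trivial adjustment of the fiber argument) and let $\{p_k=(\alpha_k,z_k)\}$ be a trajectory $\ep$-shadowing $\{q_k\}$. Because $\dist(\alpha_k,\omega_k)<1$ forces the $0$-th symbols of $\alpha_k$ and $\omega_k$ to coincide, one has $t(\alpha_k)=t(\omega_k)$ for every $k$, hence $\prod_{i<k}\lam_{t(\alpha_i)}=\prod_{i<k}\lam_{t(\omega_i)}=:\Lambda_k$. Setting $\tilde p_k:=(\alpha_k,\, z_k-\Lambda_k x)$, linearity of $f$ shows $\{\tilde p_k\}$ is a genuine trajectory, and a short induction yields $x_k-\tilde x_k=\Lambda_k x$, so the fiber difference satisfies $(z_k-\Lambda_k x)-\tilde x_k=z_k-x_k$ while symbolic distances are unchanged. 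Hence $\tilde p_k$ $\ep$-shadows $\tilde q_k$, and the converse direction is symmetric. The main subtlety I anticipate is not this algebraic step but the measure-preservation bookkeeping of the previous paragraph: one must verify that the step-$k$ transition balls for the two chains, which are centered at different fiber points, are sent Lebesgue-preservingly onto each other by the affine (in $x$) restriction of $\Phi$ to each fixed history, and it is precisely the linearity of $\lam_{t(\omega)} x$ in $x$ that makes this work.
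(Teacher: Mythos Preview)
Your proposal is correct and follows essentially the same route as the paper: define a bijection $\Omega_{q,d,N}\to\Omega_{\tilde q,d,N}$ that keeps the symbolic component and the fiber increments $\eta_k=x_{k+1}-\lam_{t(\omega_k)}x_k$, check it is measure-preserving, and check it preserves $\ep$-shadowability. The paper's proof merely asserts the two properties you actually verify; in particular your observation that one needs $t(\alpha_k)=t(\omega_k)$ (forced by $\ep<1$) for the translated sequence $\{(\alpha_k,z_k-\Lambda_k x)\}$ to remain a genuine $f$-orbit is a detail the paper glosses over.
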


\begin{proof}
Consider $\{q_k = (\omega_k, x_k)\} \in \Omega_{q, d, N}$. Put $r_k := x_{k+1} - \lambda_{t(\omega_k)}x_{k}$. Consider a sequence $\{\tilde{q}_k = (\omega_k, \tilde{x}_k)\}$, where
$$
\tilde{x}_0 = 0, \quad \tilde{x}_{k+1} = \lambda_{t(w_k)}x_k + r_k.
$$
The following holds:
\begin{enumerate}
\item the correspondence $\{q_k\} \leftrightarrow \{\tilde{q}_k\}$ is one-to-one and preserves the probability measure;
\item for any $\ep > 0$ pseudotrajectory $\{q_k\}$ is $\ep$-shadowed by a trajectory of a point $(\omega, x)$ if and only if $\{\tilde{q}_k\}$ is $\ep$-shadowed by a trajectory of a point $(\omega, x-x_0)$.
\end{enumerate}
These statements complete the proof of the lemma.
\end{proof}

For $d, \ep > 0$, $N \in \N$ define
$$
p(d, N, \ep) := \int_{\omega \in \Sigma} p((\omega, 0), d, N, \ep) \mbox{d$\nu$}.
$$
Note that the integral exists since for fixed $d$, $N$, $\ep$, the value $p((\omega, 0), d, N, \ep)$ depends only on a finite number of entries of $\omega$.
The quantity $p(d, N, \ep)$ can be interpreted as the probability of a $d$-pseudotrajectory of length $N$ to be $\ep$-shadowed.


The main result of the paper is the following:
\begin{theorem}\label{thmMain}
For any $\lam_0, \lam_1 \in \R$ satisfying (\ref{eqlam}) there exist $\ep_0 > 0$, $0< c_0 < \infty$ such that for any $\ep < \ep_0$,  the following holds:
\begin{enumerate}
\item If $c < c_0$, then $\lim_{N \to \infty} p(\ep/N^c, N, \ep) = 0$;
\item if $c > c_0$, then $\lim_{N \to \infty} p(\ep/N^c, N, \ep) = 1$.
\end{enumerate}
\end{theorem}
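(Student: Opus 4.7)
By Lemma~\ref{lemq} we may restrict to pseudotrajectories starting at $(\omega,0)$. Under the measure on $\Omega_{(\omega,0),d,N}$ averaged against $\nu$, the symbols $\alpha_k := t(\omega_k)\in\{0,1\}$ are i.i.d.\ Bernoulli$(1/2)$ and the fibre errors $r_k := x_{k+1}-\lambda_{\alpha_k}x_k$ are i.i.d.\ uniform on $(-d,d)$ and independent of the $\alpha$'s. For $d<\epsilon$ the base component admits a shadowing symbol $\omega^{*}$ automatically: set $(\omega^{*})_k=\alpha_k$ on the relevant window and extend. So the problem reduces to asking, with probability tending to $0$ or to $1$, whether there exists $x^{*}\in\R$ with $|x_k-\Lambda_k x^{*}|<\epsilon$ for every $k\le N$, where $\Lambda_k:=\prod_{i<k}\lambda_{\alpha_i}$. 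Setting $W_k:=x_k/\Lambda_k=\sum_{j<k}r_j/\Lambda_{j+1}$, this existence is equivalent to non-emptiness of $\bigcap_{k=0}^{N}(W_k-\epsilon/\Lambda_k,\,W_k+\epsilon/\Lambda_k)$, i.e.\ to the pairwise inequalities
$$
|W_j-W_k|<\epsilon(\Lambda_j^{-1}+\Lambda_k^{-1}),\qquad 0\le j,k\le N.
$$

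The plan is then to transport the problem to the simple random walk $L_k:=\log\Lambda_k=\sum_{i<k}\log\lambda_{\alpha_i}$, whose step has mean $\mu=\tfrac12\log(\lambda_0\lambda_1)\ne 0$ and moment generating function $\psi(s)=(\lambda_0^s+\lambda_1^s)/2$. Since $\psi$ is strictly convex with $\psi(0)=1$ and $\psi'(0)=\mu$, the equation $\psi(\gamma)=1$ has a unique nonzero solution $\gamma_0$, of sign opposite to $\mu$. For $0\le j<k\le N$ introduce the drawdown $T_{j,k}:=\max(L_j,L_k)-\min_{j\le i\le k}L_i\ge 0$, and put $T^{*}_N:=\max_{j<k}T_{j,k}$. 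The heart of the proof is a dichotomy indexed by the size of $T^{*}_N$ relative to $c\log N=\log(\epsilon/d)$.

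On the failure side, when some $T_{j,k}$ exceeds $c\log N$ by a constant, the sum $W_k-W_j=\sum r_i/\Lambda_{i+1}$ contains a single term $r_{i_0}/\Lambda_{i_0+1}$ of magnitude $\gg\epsilon(\Lambda_j^{-1}+\Lambda_k^{-1})$ (because $\max_i\Lambda_i^{-1}\ge N^c\min(\Lambda_j^{-1},\Lambda_k^{-1})$), and an elementary anti-concentration estimate for sums of independent uniforms forces the shadowing inequality at $(j,k)$ to fail with uniformly positive conditional probability. On the success side, when $T^{*}_N<c\log N$, the geometric control of $\Lambda_i^{-1}$ on every interval $[j,k]$ bounds $\sum_{i=j+1}^{k}\Lambda_i^{-2}$ in terms of $(\Lambda_j^{-1}+\Lambda_k^{-1})^{2}$; combined with the strong Markov decomposition of $L_k$ at ladder extrema --- which compresses the $\binom{N+1}{2}$ pairwise constraints into $O(N)$ essentially independent excursion blocks --- a sub-Gaussian tail bound delivers all the inequalities simultaneously with probability $\to 1$. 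It remains to invoke classical Wiener--Hopf / Cram\'er theory for $L_k$: excursions between consecutive ladder extrema have depth/height with exponential tail $P(T>t)\sim C e^{-|\gamma_0|t}$, the number of such excursions in $[0,N]$ is $\Theta(N)$ in either drift regime, and extreme-value theory for i.i.d.\ exponential tails yields
$$
P(T^{*}_N>c\log N)\longrightarrow \begin{cases}1 & \text{if } c<1/|\gamma_0|,\\ 0 & \text{if } c>1/|\gamma_0|.\end{cases}
$$
Combined with the dichotomy this gives the theorem with $c_0=1/|\gamma_0|$.

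The main obstacle I foresee is the success side of the dichotomy. ``No deep drawdown'' is not by itself enough, because a naive union bound over $\binom{N+1}{2}$ pairs absorbs any polynomial gain. The argument has to exploit the ladder-time Markov structure of $L_k$ to replace the supremum over pairs by a supremum over $O(N)$ essentially independent excursion blocks, and then match the sub-Gaussian tail of the uniform sums to the Cram\'er rate $|\gamma_0|$ cleanly enough that the transition occurs exactly at $c_0=1/|\gamma_0|$ rather than at some strictly larger value. The rest of the argument --- the $\omega$-shadowing, the anti-concentration on the failure side, and the classical extreme-value analysis --- should be routine.
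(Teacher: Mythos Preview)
Your reduction and the identification $c_0=1/|\gamma_0|$ agree with the paper, and the overall architecture (random walk $L_k$, Cram\'er root, ruin/extreme-value asymptotics) is the same. Two points, one on each side of the dichotomy.

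\medskip
\textbf{Success side.} The obstacle you anticipate is not there. The paper sidesteps the $\binom{N+1}{2}$ union bound and all sub-Gaussian/ladder machinery by the trivial observation $|r_i|<d$, which gives the \emph{deterministic} bound
\[
|W_k-W_j|\ \le\ d\sum_{i=j+1}^{k}\Lambda_i^{-1}\ \le\ d\,\Lambda_j^{-1}\sum_{i\ge j}e^{-(L_i-L_j)}.
\]
Hence shadowing is implied by the purely $L$-measurable event $D_N:=\max_{0\le j\le N}\sum_{i\ge j}e^{-(L_i-L_j)}<N^{c}$. A union bound over the $N$ starting points $j$ now costs only a factor $N$, and the ruin estimate $P(\min_i(L_i-L_j)<-c_1\log N)\le N^{-c_1(b-\delta)}$ together with a large-deviation bound on the tail $i>(\log N)^2$ gives $P(D_N\ge N^{c})\le N^{1-c_1(b-\delta)}+o(1)\to 0$ whenever $c>c_1>1/b$. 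No concentration on the $r_i$ is needed at all.

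\medskip
\textbf{Failure side.} Here your sketch has a real gap. From $P(T^{*}_N>c\log N)\to 1$ and ``anti-concentration gives failure at $(j,k)$ with uniformly positive conditional probability $p_0$'' you only obtain $\liminf_{N}\big(1-p(\epsilon/N^{c},N,\epsilon)\big)\ge p_0$, not convergence to $1$. What is missing is a count of \emph{many independent} failure opportunities. The paper manufactures these by chopping $[0,N]$ into $[N/2M]$ disjoint blocks of length $2M=2(\log N)^2$; in each block the event ``drawdown $>c_1\log N$ followed by recovery to the starting level'' has probability $\ge N^{-c_1(b+\delta)}+o(N^{-2})$, and on that event the corresponding $r$-variables force the pairwise constraint to fail with probability $\ge 1/8$. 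Independence of blocks and $c_1(b+\delta)<1$ then yield $\big(1-\tfrac18 N^{-c_1(b+\delta)}\big)^{[N/2M]}\to 0$. Your extreme-value framework would reach the same conclusion if you upgraded it to show that the \emph{number} of ladder excursions of depth $>c\log N$ tends to infinity for $c<c_0$; but as written, the step from ``positive conditional probability'' to ``$p\to 0$'' is not justified.
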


\begin{remark}\label{remlin}
Later (Lemma \ref{lemrw}) we prove that for any $N \in \N$, $L > 0$,  $\ep_1, \ep_2 \in (0, \ep_0)$, the equality $p(\ep_1/L, N, \ep_1) = p(\ep_2/L, N, \ep_2)$ holds.
Hence the result of Theorem \ref{thmMain} actually does not depend on the value of $\ep$.
\end{remark}

\begin{remark}\label{remY}
Due to Remark \ref{remlin} analog of the Hammel-Grebogi-Yorke conjecture for map $f$  suggests that $p(\ep/N, N, \ep)$ is close to 1. Hence, if $c_0 > 1$, then Hammel-Grebogi-Yorke conjecture is not satisfied. For an example of such parameters see Remark \ref{remex}.
\end{remark}

\section{Equivalent Formulation}\label{secRW}

Let $a_0 = \ln \lam_0$, $a_1 = \ln \lam_1$.
Consider the following random variable:
$$
\gamma =
\begin{cases}
    a_0 \quad \mbox{with probability 1/2},\\
    a_1 \quad \mbox{with probability 1/2}.
\end{cases}
$$

Fix $N>0$. Consider the random walk $\{A_i\}_{i \in [0, \infty)}$ generated by $\gamma$ and independent uniformly distributed in $[-1, 1]$ variables $\{r_i\}_{i \in [0, \infty)}$.
Define a sequence $\{z_i\}_{i \in [0, N]}$ as follows:
\begin{equation}\label{eqz}
z_0 = 0, \quad z_{i+1} = z_i + \frac{r_{i+1}}{e^{A_{i+1}}}.
\end{equation}
For given sequences $(\{A_i\}_{i \in [0, N]}, \{r_i\}_{i \in [0, N]})$ define
$$
B(k, n) := \frac{e^{A_k + A_n}}{e^{A_k} + e^{A_n}}|z_n - z_k| = \frac{e^{A_n}}{e^{A_k} + e^{A_n}}\left|e^{A_k}z_n - e^{A_k}z_k\right|,
$$
$$
K(\{A_i\}, \{r_i\}) := \max_{0 \leq k < n \leq N} B(k, n),
$$
$$
s(N, L) := P(K(\{A_i\}_{i \in [0, N]}, \{r_i\}_{i \in [0, N]})< L),
$$
where $P(\cdot)$ is the probability of a certain event.

Below we prove the following lemma.
\begin{lemma}\label{lemrw}
There exist $\ep_0 > 0$, $L_0> 0$ such that for any $d \geq 0$, $L > L_0$, $N \in \N$ satisfying $Ld < \ep_0$ the following equality holds:
$$
p(d, N, Ld) = s(N, L).
$$
\end{lemma}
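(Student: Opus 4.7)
The strategy is to build an explicit coupling between a random pseudotrajectory $\{q_k\} \in \Omega_{(\omega, 0), d, N}$ with $\omega \sim \nu$ and the random variables $(\{A_i\}, \{r_i\})$ driving the walk in Section \ref{secRW}, then verify that the $\ep$-shadowing event translates, under this coupling, into the event $\{K < L\}$. Lemma \ref{lemq} lets me restrict attention to $q = (\omega, 0)$ from the outset.

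First I would parametrize the pseudotrajectory. By definition of the Markov-chain measure $P$, the fiber noise $\rho_k := x_{k+1} - \lam_{t(\omega_k)} x_k$ is uniform on $(-d, d)$ and independent of everything else, while the symbolic noise acts only on the coordinates of $\omega_{k+1}$ with index $|i| \geq k_d$, where $k_d := \lfloor -\log_2 d\rfloor + 1$. For $d < 1/2$ this forces $t(\omega_k) = \omega_0^k$ for every $k \in [0, N]$, so after integration over $\omega_0 \sim \nu$ the increments $\gamma_k := \ln \lam_{\omega_0^{k-1}}$ are i.i.d.\ with the distribution of the $\gamma$ from Section \ref{secRW}, independent of the rescaled noise $r_k := \rho_{k-1}/d$, which is i.i.d.\ uniform on $(-1, 1)$. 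Unrolling the linear recursion yields the closed form $x_k = d\, e^{A_k} z_k$ with $A_k = \gamma_1 + \cdots + \gamma_k$, matching \eqref{eqz} exactly.

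Next I would reduce $\ep$-shadowing to a fiber condition. Since $\omega_k^j = \omega_0^{k+j}$ whenever $|j| < k_d$, for $L > L_0 \geq 1$ (so that $\ep = Ld \geq d$) the choice $\omega^* = \omega_0$ satisfies $\dist(\sigma^k \omega^*, \omega_k) < d \leq \ep$ for every $k \in [0, N]$; conversely, taking $\ep_0 < 1/2$ forces any valid $\omega^*$ to agree with $\omega_0$ on all coordinates in $[0, N]$, so the cocycle $\prod_{m<k} \lam_{t(\sigma^m \omega^*)}$ is always equal to $e^{A_k}$. Hence $\ep$-shadowing is equivalent to the existence of $x^* \in \R$ with $|x_k - e^{A_k} x^*| < \ep$ for every $k$; substituting $x_k = d\, e^{A_k} z_k$, $\ep = Ld$, and $y := x^*/d$ turns this into the assertion that the open intervals $(z_k - L/e^{A_k},\, z_k + L/e^{A_k})$, $k = 0, 1, \ldots, N$, share a common point $y$.

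A finite collection of open intervals on $\R$ has nonempty intersection if and only if every pair does (Helly's theorem on the line), and the pairwise condition $|z_n - z_k| < L(e^{-A_k} + e^{-A_n})$ rearranges exactly to $B(k, n) < L$. Taking the maximum over $0 \leq k < n \leq N$ gives the event $\{K < L\}$, so integrating its indicator yields $p(d, N, Ld) = s(N, L)$. I expect the most delicate point to be the measure-theoretic part of step one: verifying that the ``unused'' symbolic coordinates $\omega_k^i$ with $|i| \geq k_d$, which are random under $P$ but play no role in the dynamics or in any shadowing check for $\omega^* = \omega_0$, integrate out cleanly, so that the joint law of $(\{A_i\}, \{r_i\})$ induced from $(P, \nu)$ really does coincide with the distribution prescribed in Section \ref{secRW}.
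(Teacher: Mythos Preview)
Your fiber argument via Helly's theorem is correct and is a clean repackaging of the paper's minimax argument (the paper minimizes $F(y_0)=\max_k|y_k-x_k|$ and uses a sign dichotomy on the maximizing indices; you intersect intervals and reduce to pairwise overlap). The coupling of $(\{A_i\},\{r_i\})$ with the pseudotrajectory and the closed form $x_k=d\,e^{A_k}z_k$ are also fine.

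The gap is in your treatment of the symbolic coordinate. The assertion ``$t(\omega_k)=\omega_0^k$ for every $k\in[0,N]$'' (and hence ``$\omega_k^j=\omega_0^{k+j}$ for $|j|<k_d$'') is false once $k>k_d$. Tracing back, $\omega_k^0=\omega_{k-1}^1=\cdots=\omega_{k-k_d}^{k_d}$, and since $|k_d|\ge k_d$ this last coordinate is a \emph{fresh} Bernoulli sample introduced at step $k-k_d$ of the Markov chain, independent of $\omega_0^{k}$. Consequently $\omega^*=\omega_0$ does \emph{not} shadow $\{\omega_k\}$: for $k>k_d$ one has $\omega_k^0\ne(\sigma^k\omega_0)^0$ with probability $1/2$, giving $\dist(\sigma^k\omega_0,\omega_k)=1$. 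The ``unused'' coordinates you mention in your last paragraph are precisely the ones that get shifted into position $0$ after $k_d$ steps, so they do affect both the dynamics and the shadowing check. This also undermines your justification for the i.i.d.\ law of $\gamma_k$: the correct definition is $\gamma_k=a_{t(\omega_{k-1})}$, and these \emph{are} i.i.d., but because their sources are distinct independent Bernoulli bits (some from $\omega_0$, some fresh), not because they all come from $\omega_0$. The fix is exactly what the paper does: invoke the Lipschitz shadowing property of the full shift $\sigma$ to obtain some $\omega^*$ with $\dist(\sigma^k\omega^*,\omega_k)<L_0d<\ep_0$; then $t(\sigma^k\omega^*)=t(\omega_k)$ forces the cocycle along the shadowing orbit to equal $e^{A_k}$, and your Helly argument goes through unchanged. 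Equivalently, set $(\omega^*)^k:=t(\omega_k)$ for $k\in[0,N]$ directly.
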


\begin{proof}
Let us choose $\ep_0, L_0 > 0$ such that if $\dist(\omega, \tilde{\omega}) < \ep_0$, then $t(\omega) = t(\tilde{\omega})$ and the  map $\sigma$ satisfies the Lipschitz shadowing property with constants $\ep_0, L_0$.

Fix $d < d_0$, $N > 0$ and $L > L_0$ satisfying $Ld < \ep_0$.
Let us choose $\omega$ at random according to the probability measure $\nu$ and a pseudotajectory $\{q_k\} = \{(\omega_k, x_k)\} \in \Omega_{(\omega, 0), d, N}$ according to the measure $P$ (see Section~ \ref{secMain}).
Consider the sequences
$$
\gamma_k = a_{t(\omega_k)}, \quad A_k = \sum_{i = 0}^{k}\gamma_i, \quad r_k = (x_{k} - \lambda_{t(\omega_{k-1})}x_{k-1})/d.
$$
Note that $r_k$ are independent uniformly distributed in $[-1, 1]$ and $\gamma_k$ are independent and distributed according to $\gamma$.

Below we prove that the sequence $\{q_k\}$ can be $Ld$-shadowed if and only if
\begin{equation}
\label{eqL}
L \geq K(\{A_i\}, \{r_i\}).
\end{equation}

Assume that the pseudotrajectory $(\omega_k, x_k)$ is $Ld$-shadowed by an exact trajectory $(\xi_k, y_k)$.
By the choice of $\ep_0$, the following equality holds:
\begin{equation}
\label{eqxi}
t(\omega_k) = t(\xi_k).
\end{equation}

Now let us study the behavior of the second coordinate.
Note that
\begin{equation}\label{eqy}
y_{k+1} = \lambda_{t(\xi_k)}y_k = e^{\gamma_k}y_k, \quad
y_n = e^{A_n - A_k}y_k,
\end{equation}
$$
x_n = e^{A_n - A_k}x_k + e^{A_k}(z_n - z_k),
$$
where $z_k$ are defined by \eqref{eqz}. Hence,
$$
(y_n - x_n) = e^{A_n - A_k}(y_k - x_k) + e^{A_k}(z_n - z_k).
$$
From this equality it is easy to deduce that
$$
\max(|y_k-x_k|, |y_n - x_n|) \geq B(k, n)
$$
and the equality holds if $(y_k - x_k) = -(y_n-x_n)$. Hence, inequality \eqref{eqL} holds.

\medskip

Now let us assume that \eqref{eqL} holds and prove that $(w_k, x_k)$ can be $Ld$-shadowed. Let us choose a sequence $\{ \xi_k \}$ which $Ld$-shadows $\{w_k\}$, then  equalities \eqref{eqxi} hold.

For $y_0 \in \R$ define $y_k$ by relations \eqref{eqy}
and consider function $F: \R \to \R$ defined as follows:
$$
F(y_0) = \max_{0\leq k \leq N}|y_k - x_k|.
$$
Since the function $F$ is continuous, it is easy to show that it attains a minimum for some $y_0$.
Denote $L':= \min_{y_0 \in \R} F(y_0)$ and let $y_0$ be such that $L' = F(y_0)$. Let $D = \{ k \in [0, N]: |y_k - x_k| = F(y_0)\}$.
Let us consider two cases.

\textit{Case 1.} For all $k \in D$ the value $y_k - x_k$ has the same sign. Without loss of generality, we can assume that these values are positive. Then for small enough $\delta > 0$, the inequality $F(y_0 - \delta) < F(y_0)$ holds, which contradicts the choice of $y_0$.

\textit{Case 2.} There exists indices $k, n \in D$ such that the values
$y_k - x_k$ and $y_n - x_n$ have different signs. Then $(y_k - x_k) = -(y_n-x_n)$, and hence $L' = B(k, n) \leq K(\{A_i\}, \{z_i\})$.

\end{proof}

\section{Proof of Theorem \ref{thmMain}}\label{secProof}

Note that shadowing problems for the maps $f$ and $f^{-1}$ are equivalent (up to a constant multiplier at $d$). In what follows, we assume that $\lam_0 \lam_1 > 1$. Put
$$
v := E(\gamma) = (a_0+a_1)/2 > 0, \quad M := (\ln N)^2, \quad w := v/2.
$$

In the proof of Theorem \ref{thmMain}, we use the following statements.
\begin{lemma}[{Large Deviation Principle, \cite[Secion 3]{Var}}]\label{LDP}
There exists an increasing function $h: (0, \infty) \to (0, \infty)$ such that for any $\ep > 0$ and $\delta > 0$ and for large enough $n$, the following inequalities hold:
\begin{equation}\notag
P\lr{\frac{A_n}{n} - E(\gamma) < -\ep} < e^{-(h(\ep)-\delta)n}.
\end{equation}
\begin{equation}\notag
P\lr{\frac{A_n}{n} - E(\gamma) < -\ep} > e^{-(h(\ep)+\delta)n}.
\end{equation}
\end{lemma}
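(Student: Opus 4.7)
The plan is to recognize this as Cram\'er's classical large deviation theorem for bounded i.i.d. random variables and carry out the standard Legendre transform argument. Since $\gamma$ is bounded and takes two distinct values $a_0 < 0 < a_1$ with equal probability, its cumulant generating function $\Lambda(t) := \log E[e^{t\gamma}] = \log\tfrac{1}{2}(e^{ta_0}+e^{ta_1})$ is entire, strictly convex, with $\Lambda(0)=0$ and $\Lambda'(0)=E(\gamma)$. I would define
\[
h(\ep) := \sup_{t\in\R}\bigl[t(E(\gamma)-\ep) - \Lambda(t)\bigr],
\]
the Legendre transform of $\Lambda$ evaluated at $E(\gamma)-\ep$. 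For $\ep\in(0,E(\gamma)-a_0)$ the supremum is uniquely attained at some $t^*=t^*(\ep)<0$ determined by $\Lambda'(t^*)=E(\gamma)-\ep$; for $\ep\geq E(\gamma)-a_0$ the probability is identically zero and one sets $h(\ep)=+\infty$, making both inequalities trivial in that range.

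The upper bound is the easy Chernoff half. For any $t\leq 0$, Markov's inequality and independence of the $\gamma_i$ give
\[
P\lr{\tfrac{A_n}{n}-E(\gamma)<-\ep} \leq e^{-tn(E(\gamma)-\ep)}E\bigl[e^{tA_n}\bigr] = e^{-n[t(E(\gamma)-\ep)-\Lambda(t)]}.
\]
Taking the infimum over $t\leq 0$ (which coincides with the infimum over all of $\R$ since $t^*<0$) yields $P(\cdots)\leq e^{-nh(\ep)}$ uniformly in $n$, which is already strictly stronger than the claimed $e^{-(h(\ep)-\delta)n}$.

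The main obstacle is the matching lower bound, which requires Cram\'er's exponential change of measure. Let $t^*<0$ attain the supremum, and define a tilted law on $\{a_0,a_1\}$ by $\tilde P(\gamma=a_j) := \tfrac{1}{2}e^{t^*a_j-\Lambda(t^*)}$ for $j=0,1$. Under $\tilde P$ the $\gamma_i$ remain i.i.d. with tilted mean $\tilde E[\gamma]=\Lambda'(t^*)=E(\gamma)-\ep$. Fix a small $\eta>0$ and set $E_n := \{E(\gamma)-\ep-\eta < A_n/n < E(\gamma)-\ep\}$. The change-of-measure identity yields
\[
P(E_n) = \tilde E\bigl[e^{-t^*A_n+n\Lambda(t^*)}\mathbf{1}_{E_n}\bigr] \geq e^{n[\Lambda(t^*)-t^*(E(\gamma)-\ep-\eta)]}\,\tilde P(E_n) = e^{-nh(\ep)-n|t^*|\eta}\,\tilde P(E_n),
\]
where the inequality uses $t^*<0$ together with the bound on $A_n$ furnished by $E_n$. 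Under $\tilde P$ the walk $A_n$ has mean $n(E(\gamma)-\ep)$ and bounded increments, so by the classical central limit theorem $\tilde P(E_n)$ tends to a positive constant (approximately $\tfrac12$). Choosing $\eta$ small enough that $|t^*|\eta<\delta/2$ and $n$ large enough that $\tilde P(E_n)>e^{-n\delta/2}$ then delivers the desired $e^{-(h(\ep)+\delta)n}$ lower bound.

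Finally, strict convexity of $\Lambda$ forces its Legendre conjugate $I(x)=\sup_t[tx-\Lambda(t)]$ to be strictly convex with unique zero at $x=E(\gamma)$, hence strictly increasing as $x$ decreases away from $E(\gamma)$ toward $a_0$; the function $h(\ep)=I(E(\gamma)-\ep)$ is therefore strictly increasing and strictly positive on its effective domain, as required. The whole argument is standard and is carried out in full generality in the cited Varadhan reference.
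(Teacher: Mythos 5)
Your proof is correct, but it is worth noting that the paper offers no proof of this lemma at all: it is stated as a quotation of the large deviation principle from Varadhan's lecture notes, and the burden of proof is discharged by citation. What you have written is the standard self-contained Cram\'er argument for a two-valued bounded increment: the Chernoff/Markov bound with the Legendre transform $h(\ep)=\sup_t[t(E(\gamma)-\ep)-\Lambda(t)]$ for the upper estimate (which, as you observe, gives the stronger uniform bound $e^{-nh(\ep)}$ with no $\delta$ needed), and the exponential change of measure at the tilt parameter $t^*<0$ combined with the central limit theorem under the tilted law for the lower estimate. All the steps check out: the tilted measure is a probability measure by the definition of $\Lambda$, its mean is $\Lambda'(t^*)=E(\gamma)-\ep$, the sign of $t^*$ is used correctly in bounding the Radon--Nikodym factor on the event $E_n$, and strict convexity of $\Lambda$ gives the required monotonicity of $h$. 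Two small caveats: the lemma is stated with $h$ mapping into $(0,\infty)$, so your convention $h(\ep)=+\infty$ for $\ep\geq E(\gamma)-a_0$ sits outside the stated codomain, and in that range the strict lower bound actually fails (the probability is exactly zero); this is really a defect of the lemma as stated rather than of your argument, and it is harmless because the paper only invokes the lemma at $\ep=v$ and $\ep=v-w$, both of which lie in the effective domain. Also $A_n$ as defined in the paper has $n+1$ summands rather than $n$, but this off-by-one is absorbed into $\delta$ for large $n$.
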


\begin{lemma}[Ruin Problem, {\cite[Chapter XII, \S 4, 5]{Feller2}}]\label{lemC}
Let $b$ be the unique positive root of the equation
$$
\frac{1}{2}\lr{e^{-b a_0} + e^{-b a_1}} = 1.
$$
For any $\delta > 0$ and for large enough $C > 0$, the following inequalities hold:
\begin{equation}
\label{eqb1}
P(\exists i \geq 0 : A_i \leq -C) \leq e^{-C(b-\delta)},
\end{equation}
\begin{equation}
\label{eqb2}
P(\exists i \geq 0 : A_i \leq -C) \geq e^{-C(b+\delta)},
\end{equation}
\end{lemma}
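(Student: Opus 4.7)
The plan is to exploit the exponential martingale $M_n := e^{-b A_n}$: by the very choice of $b$ we have $E[e^{-b\gamma}] = 1$, so $M_n$ is a positive mean-one martingale. Existence and uniqueness of $b > 0$ follow from strict convexity of $\phi(t) := \tfrac12(e^{-ta_0} + e^{-ta_1})$, which satisfies $\phi(0) = 1$, $\phi'(0) = -v < 0$, and $\phi(t) \to +\infty$ as $t \to \infty$ (since $-a_0 > 0$); so $\phi$ first decreases below~$1$ and then returns to~$1$ at a unique $b > 0$ lying past its minimum, where in particular $\phi'(b) > 0$.

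For the upper bound~\eqref{eqb1}, let $\tau := \inf\{i \geq 0 : A_i \leq -C\}$. Optional stopping applied to the nonnegative martingale $M_{n \wedge \tau}$ gives
$$
1 \;=\; E[M_{n \wedge \tau}] \;\geq\; E[M_\tau \,;\, \tau \leq n] \;\geq\; e^{bC}\, P(\tau \leq n),
$$
and letting $n \to \infty$ yields $P(\tau < \infty) \leq e^{-bC}$, which is in fact stronger than~\eqref{eqb1} (no $\delta$ needed on this side).

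For the lower bound~\eqref{eqb2}, I would perform an exponential change of measure: define $\tilde P$ on each $\mathcal F_n$ by $d\tilde P / dP = M_n$. Under $\tilde P$ the increments $\gamma_i$ remain i.i.d.\ with the tilted two-point law $\tilde P(\gamma = a_j) = \tfrac12 e^{-b a_j}$, $j = 0, 1$, and mean $\tilde E[\gamma] = -\phi'(b) < 0$. Hence the tilted walk drifts to $-\infty$ almost surely, so $\tilde P(\tau < \infty) = 1$, and changing measure back gives
$$
P(\tau < \infty) \;=\; \tilde E\bigl[e^{b A_\tau}\bigr].
$$
It remains to control the overshoot: by definition of $\tau$ one has $A_{\tau - 1} > -C$, and since each jump is at least $a_0$ in size, $A_\tau \geq -C + a_0$. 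Therefore $e^{bA_\tau} \geq e^{ba_0}\, e^{-bC}$, yielding $P(\tau < \infty) \geq e^{ba_0}\, e^{-bC}$; the constant $e^{ba_0}$ is absorbed into $e^{-\delta C}$ for $C$ sufficiently large, which is precisely~\eqref{eqb2}.

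The main difficulty lies in the lower bound: it requires both the tilting trick and a uniform overshoot estimate. The bounded two-point distribution of $\gamma$ makes the overshoot control essentially trivial; for unbounded step distributions one would instead need the Wiener--Hopf / renewal-theoretic machinery from Feller's Chapter~XII, which is presumably the reason the author cites it rather than reproving the lemma.
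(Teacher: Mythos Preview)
The paper does not actually prove this lemma: it is quoted directly from Feller (Chapter~XII, \S\S4--5) and used as a black box. Your argument is a correct, self-contained proof in this two-point setting, and it is in spirit exactly the classical Cram\'er--Lundberg approach that Feller develops in greater generality: the Wald/exponential martingale $e^{-bA_n}$ for the upper bound, and the associated measure change (Feller's ``associated random walk'') for the lower bound. The only place Feller needs more machinery is the overshoot at the barrier, handled there via renewal/Wiener--Hopf factorisation; you correctly observe that with bounded steps the overshoot is at most $|a_0|$, which immediately gives the constant $e^{ba_0}$ and hence \eqref{eqb2} for large $C$.

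Two minor remarks. First, in the paper's convention $A_0=\gamma_0$ rather than $0$, but this changes nothing: $E[e^{-b\gamma_0}]=1$ so the martingale still has mean one, and for $C>|a_0|$ one has $\tau\geq 1$ so your overshoot bound $A_{\tau-1}>-C$ applies. Second, the phrase ``each jump is at least $a_0$ in size'' is slightly ambiguous; what you use is simply $\gamma\geq a_0$.
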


Put $c_0 = 1/b$.
Due to Lemma \ref{lemrw}, it is enough to prove the following:
\begin{itemize}
\item[(S1)] If $c < c_0$, then $\lim_{N \to \infty} s(N, N^c) = 0$.
\item[(S2)] If $c > c_0$, then $\lim_{N \to \infty} s(N, N^c) = 1$.
\end{itemize}

\begin{remark}\label{remex}
For $\lam_0 = 1/2$, $\lam_1 = 3$ the inequalities $b< 1$, $c_0 > 1$ hold, and hence by Remark~\ref{remY} the statement of Conjecture \ref{ConjHGY} does not hold.
Similarly, $c_0 > 1$ for $\lam_0 = 1/3$, $\lam_1 = 2$.
\end{remark}

Below we prove items (S1) and (S2).

\subsection{Proof of (S1)}

Assume that $c < 1/b$. Let us choose $c_1 \in (c, 1/b)$ and $\delta > 0$ satisfying
\begin{equation}\label{eq1}
c_1(b+\delta) < 1.
\end{equation}

Consider the following events:
\begin{align*}
I & =
\left\{
\exists i \in [0, M]: A_i \leq -c_1 \ln N; \; \mbox{and} \; A_{2M} \geq 0
\right\},\\
I_1 & = \left\{ \exists i \in [0, M]: A_i \leq -c_1 \ln N \right\},\\
I_2 & = \left\{ \exists i \in [0, M]: A_i \leq -w M \right\},\\
I_3 & = \left\{ A_{2M} - A_M \leq w M \right\}.
\end{align*}
The following holds:
\begin{equation}
\label{2.1}
P(I) \geq P(I_1) - P(I_2) - P(I_3),
\end{equation}
\begin{align}\notag
P(I_1) & \geq P(\exists i \geq 0: A_i \leq -c_1 \ln N) - P(\exists i > M: A_i \leq -c_1 \ln N) \\
\notag & \geq e^{-c_1 \ln N (b+\delta)} - \sum_{i = M+1}^N P(A_i \leq 0)
 \geq N^{-c_1(b+\delta)} - \sum_{i = M+1}^N e^{-i h(v)} \\
\label{eqI1} & \geq N^{-c_1(b+\delta)} - \frac{1}{1 - e^{-h(v)}} e^{-(M+1) h(v)}  \geq N^{-c_1(b+\delta)} + o(N^{-2}).
\end{align}
Similarly
\begin{equation}\label{eqI2}
P(I_2) \leq \sum_{i = M+1}^{\infty} P(A_i \leq 0) = o(N^{-2}),
\end{equation}
\begin{equation}\label{eqI3}
P(I_3) \leq e^{-Mh(v-w)} = o(N^{-2}).
\end{equation}

From inequalities \eqref{2.1}-\eqref{eqI3} we conclude that
\begin{equation}
\label{eqI}
P(I) \geq N^{-c_1(b+\delta)} + o(N^{-2}).
\end{equation}

Assume that the event $I$ has happened and let $i \in [0, M]$ be one of the indices satisfying the inequality $A_i < -c_1 \ln N$. Note that the following events are independent:
$$
J_1 =\{r_{i} \in [1/2; 1]\}, \quad J_2 = \left\{z_{2M} - z_0 \geq \frac{r_{i}}{e^{A_{i}}}\right\}.
$$
Hence,
$$
P\lr{z_{2M}- z_0 \geq \frac{1}{2e^{A_i}}} \geq P(J_1)P(J_2) = 1/4 \cdot 1/2 = 1/8
$$
and
$$
P(B(0, 2M) > N^{c_1}/4) \geq \frac{1}{8}P(I) = \frac{1}{8}N^{-c_1(b+\delta)} + o(N^{-2}).
$$
Note that for large enough $N$, the inequality $N^c < N^{c_1}/4$ holds, and hence
$$
P(B(0, 2M) > N^c) \geq \frac{1}{8}N^{-c_1(b+\delta)} + o(N^{-2}).
$$
Similarly, for any $k \in [0, N-2M]$,
$$
P(B(k, k+2M) > N^c) \geq \frac{1}{8}N^{-c_1(b+\delta)} + o(N^{-2}).
$$
Note that the events in the last expression for $k = 0, 2M, 2\cdot 2M, \dots ([N/(2M)]-1)2M$ are independent, and hence
\begin{multline}\label{Add6.1}
P\lr{\exists k \in [0, N-2M]: B(k, k+2M) > N^c} \geq \\ 1 - \lr{1- \lr{\frac{1}{8}N^{-c_1(b+\delta)} + o(N^{-2})}}^{[N/(2M)]}.
\end{multline}
Using \eqref{eq1}, we conclude that
\begin{multline*}
\lr{\frac{1}{8}N^{-c_1(b+\delta)} + o(N^{-2})}[N/(2M)] \geq \lr{\frac{1}{8}N^{-c_1(b+\delta)} + o(N^{-2})}\lr{\frac{N}{2(\ln N)^2} - 1} \\
 = \frac{1}{16(\ln N)^2}N^{1- c_1(b+\delta)} + o(N^{-1}) 
 \xrightarrow[N \to \infty]{} \infty
\end{multline*}
and hence
\begin{equation}\label{Add6.2}
\lr{1- \lr{\frac{1}{8}N^{-c_1(b+\delta)} + o(N^{-2})}}^{[N/(2M)]} \xrightarrow[N \to \infty]{} 0.
\end{equation}
Relations \eqref{Add6.1}, \eqref{Add6.2} imply that
$$
P(K(\{A_i\}_{i \in [0, N]}, \{r_i\}_{i \in [0, N]}) > N^c) \xrightarrow[N \to \infty]{} 1.
$$
Hence,
$$
\lim_{N \to \infty} s(N, N^c) = 0.
$$

\subsection{Proof of (S2)}

Let $c > 1/b$. Let us choose $c_1 \in (1/b, c)$ and $\delta > 0$ satisfying $c_1(b-\delta) > 1$.

Note that for any $n > k$ the following inequalities hold:
$$
e^{A_k}|z_n - z_k| \leq \sum_{i = k}^n e^{-(A_i - A_k)},
$$
$$
\frac{e^{A_n}}{e^{A_k} + e^{A_n}} \leq 1.
$$
Hence,
\begin{equation}
\label{Add7.1}
K(\{A_i\}, \{r_i\}) \leq \max_{0\leq k < n \leq N} \sum_{i = k}^n e^{-(A_i - A_k)} \leq \max_{0 \leq k \leq N} \sum_{i = k}^N e^{-(A_i - A_k)} =: D(\{A_i\}).
\end{equation}

The following holds:
\begin{align*}
P(D(\{A_i\}) < N^c) & \geq 1 - P\lr{\exists k \in [0, N] : \sum_{i = k}^N e^{-(A_i - A_k)} > N^c}\\
 & \geq 1 - N P\lr{\sum_{i = 0}^N e^{-(A_i - A_k)} > N^c}.
\end{align*}

Note that if $\sum_{i = 0}^N e^{-(A_i - A_k)} > N^c$, then one of the following inequalities holds:
$$
\exists i \in [0, M]: e^{-A_i} > \frac{N^c}{2M},
$$
$$
\exists i \in [M, N]: e^{-A_i} > \frac{N^{c-1}}{2}.
$$
Note that for large enough $N$, the following inequalities hold:
$$
\frac{N^c}{2M} > N^{c_1}, \quad N^{c - 1}/2 > e^{-w M},
$$
and hence (arguing similarly to the previous section), for large enough $N$,
\begin{align*}
P\lr{\sum_{i = 0}^N e^{-(A_i - A_k)} > N^{c_1}}
& \leq P(\exists i \in [0, M]: A_i < -c_1 \ln N ) + P(\exists i \in [M, N]: A_i < w M) \\
& \leq e^{-(b-\delta)c_1 \ln N} + o(N^{-2}) = N^{-(b-\delta)c_1} + o(N^{-2}).
\end{align*}
Finally,
$$
P(D(\{A_i\}) \leq N^c) \geq 1- N(N^{-(b-\delta)c_1} + o(N^{-2})) \xrightarrow[N \to \infty]{} 1,
$$
and hence relations \eqref{Add7.1} imply that
$$
\lim_{N \to \infty} s(N, N^c) = 1.
$$

\section{Acknowledgment}

The author is grateful to Jairo Bochi for the help in stating the problem and to Artem Sapozhnikov and Yinshan Chang for the help with basics in probability. The author is also grateful to the hospitality of Pontificia Universidade Catolica do Rio de Janeiro where the work was initiated. The work of the author was partially supported by
by Chebyshev Laboratory (Department of Mathematics and Mechanics, St. Petersburg State University)  under RF Government grant 11.G34.31.0026, JSC ``Gazprom neft'', by the Saint-Petersburg State University research grant 6.38.223.2014, and by  the German-Russian Interdisciplinary Science Center (G-RISC) funded by the German Federal Foreign Office via the German Academic Exchange Service (DAAD).

\end{document}